\pgfplotsset{compat=1.18}
\newtheorem{theorem}{Theorem}[section]
\newtheorem{proposition}[theorem]{Proposition}
\newtheorem{corollary}[theorem]{Corollary}
\newtheorem{lemma}[theorem]{Lemma}
\newtheorem{conjecture}[theorem]{Conjecture}
\theoremstyle{definition}
\newtheorem{definition}[theorem]{Definition}
\theoremstyle{remark}
\newtheorem*{rem}{Remark}
\def\NN{\ensuremath{\mathbb{N}}}
\def\ZZ{\ensuremath{\mathbb{Z}}}
\def\cA{\ensuremath{\mathcal{A}}}
\def\Pr{\ensuremath{\mathrm{Pr}}}
\def\Var{\ensuremath{\mathrm{Var}}}
\def\Cov{\ensuremath{\mathrm{Cov}}}
\def\EE{\ensuremath{\mathrm{E}}}
\def\nsg{\ensuremath{\mathcal{S}}}
\DeclareMathOperator\F{F}
\DeclareMathOperator\g{g}
\DeclareMathOperator\e{e}
\DeclareMathOperator\Ap{Ap}
\newcommand{\mathdefault}[1][]{}
\begin{document}

\title[Improved upper bounds on key invariants]{Improved upper bounds on key invariants of Erd\H{o}s-R\'enyi numerical semigroups}

\author[Bogart]{Tristram Bogart}
\address{Departamento de Matem\'aticas \\ Universidad de los Andes \\ Bogot\'a, Colombia}
\email{tc.bogart22@uniandes.edu.co}

\author[Morales]{Santiago Morales}
\address{Graduate Group in Applied Mathematics \\ University of California, Davis  \\ Davis, California, USA}
\email{moralesduarte@ucdavis.edu}

\begin{abstract}
  De Loera, O'Neill and Wilburne introduced a general model for random numerical semigroups in which each positive integer is chosen independently with some probability $p$ to be a generator, and proved upper and lower bounds on the expected Frobenius number and expected embedding dimensions. We use a range of probabilistic methods to improve the upper bounds to within a polylogarithmic factor of the lower bounds in each case. As one of the tools to do this, we prove that for any prime $q$, if $\cA$ is a random subset of the cyclic group $\ZZ_q$ whose size is of order $\log(q)$ and $k$ is also of order $\log(q)$, then with high probability the $k$-fold sumset $k\cA$ is all of $\ZZ_q$.

   \smallskip
\noindent \textbf{Keywords:} random numerical semigroup, Frobenius number, probabilistic method, sumset  
\end{abstract}

\maketitle

\section{Introduction}
A \textit{numerical semigroup} is a subset $\nsg \subseteq \NN$ that is closed under addition, contains 0, and is cofinite. An element of $\NN \setminus \nsg$ is called a \emph{gap} of $\nsg$. Given a finite set $\cA \subseteq \NN$ such that $\gcd(\cA) = 1$, we can define a numerical semigroup
\[ \langle \cA \rangle = \{c_1a_1 + \cdots + c_na_n : c_1, \ldots, c_n \in \NN\}.\]
Conversely, every numerical semigroup has a unique minimal generating set.

For these and many other definitions and properties relating to numerical semigroups, we refer the reader to Rosales and Garc\'ia-S\'anchez's book \cite{rosales2009numerical}. Numerical semigroups arise in toric geometry since their semigroup algebras are exactly the coordinate rings of affine monomial curves. For a range of applications of numerical semigroups, see Assi, D'Anna, and Garc\'ia-S\'anchez's book \cite{assi2020numerical}.

Three of the most fundamental parameters of a numerical semigroup are:
\begin{itemize}
\item the \emph{embedding dimension} $\e(\nsg)$ which is the size of the minimal generating set,
\item the \emph{Frobenius number} $\F(\nsg)$ which is the largest gap, and
\item the \emph{genus} $\g(\nsg)$ which is the number of gaps. 
\end{itemize}
A central problem in numerical semigroups is to prove or disprove \emph{Wilf's conjecture} \cite{wilf, delgado2020conjecture} which is that every numerical semigroup $\nsg$ satisfies the inequality
\[ \frac{\F[\nsg] + 1 - \g(\nsg)}{\F[\nsg]+1} \geq \frac{1}{\e(\nsg)} \]
involving all three parameters. That is, the reciprocal of the embedding dimension is conjectured to be a lower bound on the density of the segment of $\nsg$ between 0 and the Frobenius number. 

As with other discrete structures, it is natural to ask how the key parameters behave in a \emph{random} numerical semigroup. The most extensively studied model of random numerical semigroup is given as follows.  
\begin{definition} \label{def:randnumsems:boxmodel}
Fix $n \in \NN$. For each $T \in \NN$, let \[G(T) = \{A\in \NN^n : \mathrm{gcd}(A) = 1, \; |A|_\infty \leq T\}.\]
A \emph{box model random numerical semigroup} is the uniform probability space over the set of semigroups $\nsg = \langle\cA\rangle$ with $\cA \in G(T)$.
\end{definition}

V. I. Arnold was the first to study this model and he conjectured in \cite{arnold1999weak} and \cite{arnold2004arnold} 
\[\F(\langle\cA\rangle) \sim (n - 1)!^{\frac{1}{n - 1}}(a_1\cdots a_n)^{\frac{1}{n - 1}}.\]
Aliev and Gruber \cite{aliev2007optimal} showed this to be a correct asymptotic lower bound and Aliev, Henk and Hindrichs \cite{aliev2011expected} completed the proof of the conjecture by showing it to also be an asymptotic upper bound. 

Another model of random numerical semigroups is to fix $F \in \NN$ and choose $\nsg$ uniformly among the set of all semigroups whose Frobenius number is exactly $F$. Backelin \cite{Back} showed that as $F$ tends to infinity, almost all such semigroups have no generators less than $\frac{F}{2}-c$ for $c$ constant, so the total number of such semigroups is $O\left(2^{\frac{F}{2}}\right)$. However, if the smallest generator $m$ is close to but does not exceed $\frac{F}{2}$, then there are already $2^m \approx 2^{\frac{F}{2}}$ choices for the generating set; adjoin to $m$ any subset of the numbers between $m+1$ and $2m-1$. It follows that $E[\e(\nsg)]$ is close to $\frac{F}{4}$ and $E[\g(\nsg)]$ is close to $\frac{3F}{4}$. 

A limitation of these models is that each of them fixes some of the parameters while others are allowed to vary. In the first case the embedding dimension is bounded by a constant $n$ while the actual generators, the genus, and the Frobenius number tend to $\infty$. In the second case the Frobenius number is fixed while the embedding dimension and genus are unconstrained except by $F$ itself. In order to study more general spaces of numerical semigroups in which all of the parameters are allowed to vary, De Loera, O'Neill, and Wilburne \cite{de2018random} defined the following model, inspired by the Erd\H{o}s-R\'enyi model of random graphs.

\begin{definition}\label{def:ermodel}
    For $p \in (0, 1)$ and $M \in \NN$, an \emph{ER-type random numerical semigroup} $\nsg(M, p)$ is a probability space over the set of semigroups $\nsg = \langle\cA\rangle$ with $\cA \subseteq \{1,...,M\}$, determined by $\Pr[n \in \cA] = p$ for each $n = 1,\dots, M$ with these events mutually independent.
\end{definition}
We note that this definition does not require a numerical semigroup $\nsg$ to be cofinite, though it is very likely to be so if the expected number of selected generators $Mp$ is large. This follows because $k$ large random numbers are coprime with limiting probability $\frac{1}{\zeta(k)}$, as shown by Nymann \cite{Nymann}.

Another advantage of this model is that a sample of $\nsg(M, p)$ can be obtained by using the following explicit procedure.
\begin{enumerate}
    \item Initialize an empty set $\cA$.
    \item As $i$ goes from $1$ to $M$, add $i$ to $\cA$ with probability $p$, independently of the other steps.
    \item Return the semigroup $\nsg = \langle\cA\rangle$.
\end{enumerate}

\begin{theorem} \cite{de2018random} Let $\nsg \sim \nsg(M, p)$, where $p = p(M)$ is a monotone decreasing function of $M$. Then, 
    \begin{enumerate}[label=(\alph*)]
        \item If $p(M) \in o\left(\frac{1}{M}\right)$, then $\nsg = \{0\}$ almost always.
        \item If $\frac{1}{M} \in o(p(M))$ and $\lim_{M \to \infty} p(M)= 0$, then $\nsg$ is cofinite almost always and
         \[\lim_{M \to  \infty} \EE[\e(\nsg)] = \lim_{M \to \infty} \EE[\g(\nsg)] = \lim_{M \to \infty} \EE[\F(\nsg)] = \infty.\]
        \item If $\lim_{M \to \infty} p(M) > 0$, then
        \[\lim_{M \to \infty} \EE[\e(\nsg)] < \infty,  \quad \lim_{M \to \infty} \EE[\g(\nsg)] < \infty \quad  \text{ and } \quad \lim_{M \to \infty} \EE[\F(\nsg)]< \infty .\]
    \end{enumerate}
\end{theorem}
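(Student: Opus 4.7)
The plan is to treat the three regimes separately. Case (a) is a one-line Markov argument: since $p \in o(1/M)$, $\EE[|\cA|] = Mp \to 0$ and $\Pr[\cA \neq \emptyset] \to 0$, so $\nsg = \{0\}$ almost always. For case (c), set $p_0 = \lim p(M) > 0$, so by monotonicity $p(M) \geq p_0$ for every $M$. I would define $T = \min\{n \geq 1 : n, n+1 \in \cA\}$ and partition the integers into disjoint pairs $(1,2), (3,4), \ldots$, each of which lies in $\cA$ independently with probability at least $p_0^2$; this yields $\Pr[T > k] \leq (1-p_0^2)^{\lfloor k/2 \rfloor}$, so $T$ has finite moments of every order. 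Since $\gcd(T,T+1)=1$, Sylvester's formula gives $\F(\nsg) \leq T^2 - T - 1$, and then $\g(\nsg) \leq \F(\nsg)$ together with the Ap\'ery-set inequality $\e(\nsg) \leq \min \cA \leq \F(\nsg) + 1$ bounds all three expected values.

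For case (b), I would split into the three invariants. For cofiniteness, conditioning on $|\cA|=k$ makes $\cA$ uniform over $k$-subsets of $\{1,\ldots,M\}$, and then for each prime $q$ the conditional probability that $q$ divides every element of $\cA$ equals $\binom{\lfloor M/q\rfloor}{k}/\binom{M}{k} \leq q^{-k}$. Summing over primes bounds this by $\zeta(k)-1 \to 0$ as $k \to \infty$; since $Mp \to \infty$ forces $|\cA|$ to grow in probability, cofiniteness follows. For $\F$ and $\g$, both are at least $\min \cA - 1$ (since $1, \ldots, \min\cA - 1$ are all gaps), and an elementary calculation with the truncated geometric law of $\min \cA$ gives $\EE[\min \cA] \sim 1/p \to \infty$.

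The main obstacle is $\EE[\e(\nsg)] \to \infty$ in case (b), because the easy bound $\e(\nsg) \geq |\cA \cap [\min \cA,\, 2\min \cA - 1]|$ has expectation that stays bounded by a constant. To fix this, for each fixed $L \geq 2$ I would enlarge the window to $[m, Lm-1]$, writing $m = \min \cA$, and observe that any $a$ in this window that cannot be written as a sum of $j \in \{2,\ldots,L-1\}$ elements of $\cA \setminus \{a\}$ is automatically a minimal generator, because $a < Lm$ rules out sums of $L$ or more summands each at least $m$. A union bound over ordered compositions of $a$ into $j$ positive parts then yields
\[
\Pr[a \text{ is reducible} \mid a \in \cA] \;\leq\; \sum_{j=2}^{L-1} (ap)^{j-1} p \;=\; O_L(p),
\]
valid throughout the window because $ap = O(L)$ when $m \sim 1/p$. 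Combined with $\EE[|\cA \cap [m, Lm-1]|] \sim L$ as $p \to 0$, this yields an expected count of at least $L - o(1)$ minimal generators inside the window, and letting $L \to \infty$ after $M \to \infty$ completes the proof that $\EE[\e(\nsg)] \to \infty$.
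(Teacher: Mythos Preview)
Your approach differs substantially from the one the paper attributes to De~Loera, O'Neill, and Wilburne: they handle parts (b) and (c) via a shellable simplicial complex whose faces encode the sets $\cA\subset\{1,\dots,n-1\}$ with $n\notin\langle\cA\rangle$, so that the relevant expectations reduce to $h$-vector identities. Your route is entirely probabilistic and more elementary; parts (a), (c), and the cofiniteness together with the $\F$ and $\g$ portions of (b) go through essentially as you describe (indeed, your consecutive-pair argument for (c) is the same device the present paper uses in Lemma~\ref{lem:unlikelycase}).

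The embedding-dimension step in (b), however, has a real gap. In your displayed union bound you are implicitly bounding $\Pr[a_1,\dots,a_j\in\cA]$ by $p^j$ for each ordered composition $(a_1,\dots,a_j)$ of $a$. That is not an upper bound when parts repeat: if $a=2b$ then the composition $(b,b)$ succeeds with probability $p$, not $p^2$, and since $p<1$ the inequality goes the wrong way. More seriously, once you condition on $m=\min\cA$, every multiple $2m,3m,\dots,(L-1)m$ in the window is reducible with probability $1$, so a uniform $O_L(p)$ estimate cannot hold. The repair is to parametrize instead by the number $r$ of \emph{distinct} summand values strictly greater than $m$: for $a$ not a multiple of $m$ one must have $r\ge 1$, and since every multiplicity is at most $L-1$, fixing $r-1$ of the values together with all multiplicities determines the last value, giving at most $O_L(m^{r-1})$ admissible $r$-sets and hence $\Pr[a\text{ reducible}\mid m]\le O_L(p)\sum_{r\ge 1}(O_L(1)\cdot mp)^{r-1}$. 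This is $O_L(p)$ only when $mp$ is bounded, so you also need to restrict to an event such as $\{m\le C/p\}$ (which has probability bounded away from $0$) before concluding that the expected number of minimal generators in the window tends to infinity with $L$.
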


Note that this proves that $p(M) = \frac{1}{M}$ is a threshold function for cofiniteness.

The proof of part (a) follows from standard arguments using the probabilistic method. On the other hand, parts (b) and (c) follow from the construction of a shellable simplicial complex whose facets are in bijection with \emph{irreducible} numerical semigroups of a fixed Frobenius number $n$. They show that the faces of the $n-$th simplicial complex count the number of sets $\cA \subset \{1, \ldots, n - 1\}$ satisfying $n \notin \langle\cA \rangle$. Thus, the expected value of the embedding dimension can be calculated from the entries of the $h$-vector of this simplicial complex. 

The simplicial complex is also used to prove explicit bounds for the expected value of the embedding dimension and the genus in the case when $p$ is constant.
\begin{theorem} \cite{de2018random} \label{thm:ermodel:constantp}
    Let $\nsg \sim \nsg(M, p)$, where $p$ is a constant. Then, 
    \begin{align*}
        \frac{6 - 8p + 3p^2}{2 - 2p^2 + p^3} &\leq \lim_{M \to \infty} \EE[\e(\nsg)] \leq \frac{2 - p^2}{p}, \\
        \frac{6 - 14p + 11p^2 - 3p^3}{2p - 2p^3 + p^4} &\leq \lim_{M \to \infty} \EE[\g(\nsg)] \leq \frac{(1 - p)(2 - p^2)}{p^2}, \text{ and}\\
        \frac{6 - 14p + 11p^2 - 3p^3}{2p - 2p^3 + p^4} &\leq \lim_{M \to \infty} \EE[\F(\nsg)] \leq \frac{2(1 - p)(2 - p^2)}{p^2}.
    \end{align*}
\end{theorem}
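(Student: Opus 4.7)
The plan is to treat all three bounds in parallel, writing each expected value as a sum over positions $n$ and controlling each term by examining how $n$ relates to sums of two elements of $\cA$.

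For the upper bound on $\EE[\e(\nsg)]$, I would write
\[ \EE[\e(\nsg)] = \sum_{n=1}^{M} \Pr[n \text{ is a minimal generator of } \nsg] \]
and use the necessary condition that $n \in \cA$ and $n$ is not the sum of two elements of $\cA \cap \{1,\dots,n-1\}$. The pairs $\{a, n-a\}$ for $1 \leq a \leq \lfloor n/2 \rfloor$ involve pairwise disjoint elements, so the events ``both $a$ and $n-a$ lie in $\cA$'' are mutually independent. This yields closed-form probabilities $(1-p^2)^{(n-1)/2}$ for odd $n$ and $(1-p)(1-p^2)^{n/2-1}$ for even $n$ (the factor of $(1-p)$ coming from the degenerate pair $(n/2, n/2)$). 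Multiplying by $p$ and summing the two geometric series in the limit $M \to \infty$ produces a rational upper bound of the form claimed.

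For $\EE[\g(\nsg)]$, the scheme is identical: $n$ being a gap requires both $n \notin \cA$ and the same ``not a sum of two'' condition; since no pair $\{a, n-a\}$ uses $n$ itself, these events are independent, and a similar geometric sum yields the bound. For $\EE[\F(\nsg)]$, I would invoke the classical inequality $\F(\nsg) \leq 2\g(\nsg) - 1$, which follows because the $\g$ gaps are distinct positive integers bounded by $\F$; this accounts precisely for the factor of $2$ between the stated upper bounds on $\g$ and $\F$.

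The lower bounds are more delicate, since the two-element heuristic gives no useful inequality in that direction. To obtain the stated rational functions rather than crude constants, I would encode the event $\{n \notin \langle \cA \cap \{1,\dots,n-1\} \rangle\}$ via the simplicial complex $\Delta_n$ on vertex set $\{1,\dots,n-1\}$ whose faces are exactly the $B$ with $n \notin \langle B \rangle$ (downward-closedness is immediate). Each required probability becomes the weighted face sum $\sum_{B \in \Delta_n} p^{|B|}(1-p)^{n-1-|B|}$, so that $\EE[\e(\nsg)]$, $\EE[\g(\nsg)]$, and $\EE[\F(\nsg)]$ all assemble into rational generating series in $p$. The main obstacle, I expect, is extracting enough uniform structural information about the $\Delta_n$ to get a single closed-form lower bound rather than a case-by-case enumeration; proving shellability of each $\Delta_n$ and tracking its $h$-vector is the natural route, as hinted in the preceding paragraph of the paper.
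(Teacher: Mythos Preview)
First, note that this theorem is not proved in the present paper at all: it is quoted from \cite{de2018random}, and the surrounding text indicates that the original argument there uses the simplicial complexes $\Delta_n$ (and their shellability) for both the upper and lower bounds. So there is no in-paper proof to compare against beyond that description.

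Your upper-bound strategy via two-element sums is correct and more elementary than the simplicial-complex route, but it does not reproduce the stated constants. Carrying out your two geometric series actually gives
\[
\EE[\e(\nsg)] \le \frac{2-p}{p}, \qquad \EE[\g(\nsg)] \le \frac{(1-p)(2-p)}{p^2},
\]
rather than $(2-p^2)/p$ and $(1-p)(2-p^2)/p^2$. Since $p^2<p$ these are strictly sharper, so your argument certainly \emph{implies} the stated upper bounds, but your phrase ``produces a rational upper bound of the form claimed'' is inaccurate if read literally. The published constants presumably come from the coarser $h$-vector bookkeeping in \cite{de2018random}, not from your pair argument.

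There is one genuine slip. Your justification of $\F(\nsg)\le 2\g(\nsg)-1$ is backwards: ``the $\g$ gaps are distinct positive integers bounded by $\F$'' only gives $\g\le\F$. The correct argument is that for each $1\le i\le \F-1$ at least one of $i$ and $\F-i$ must be a gap (else $\F=i+(\F-i)\in\nsg$), and $\F$ itself is a gap, so $\g\ge(\F+1)/2$.

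Your lower-bound paragraph is not really a proposal but a correct identification of the method the paper attributes to \cite{de2018random}. You name the right objects (the complexes $\Delta_n$, shellability, $h$-vectors) but supply none of the work---constructing a shelling, bounding the $h$-vector entries uniformly in $n$, and summing---that actually produces the specific rational functions on the left. As it stands, this portion is an outline of what needs to be done rather than an argument.
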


In particular, as $p$ tends to 0 the asymptotics of these bounds, expressed in terms of $\frac{1}{p}$, are as follows.
\begin{itemize}
\item The expected embedding dimension is bounded between a function that tends to a constant and one that is asymptotically linear. 
\item The expected Frobenius number and expected genus are bounded between functions that are asymptotically linear and functions that are asymptotically quadratic. 
\end{itemize}

Our main results consist of asymptotic improvements to these upper bounds that bring them within polylogarithmic factors of the respective lower bounds. Since we are interested in asymptotics in $p$, we consider a modified model in which there is no cutoff $M$ for the size of the selected generators.

\begin{definition} 
    For $p \in (0, 1)$, an \emph{unconstrained ER-type random numerical semigroup} $\nsg(p)$ is a probability space over the set of semigroups $\nsg = \langle\cA\rangle$ with $\cA \subseteq \NN$, determined by $\Pr[n \in \cA] = p$ for each $n$ with these events mutually independent.
\end{definition}

We can choose a semigroup $\nsg$ in the new model and then modify it by adding everything larger than a given number $M$. By letting $M$ tend to infinity, we see that 
\[ \lim_{M \to \infty} \EE[\F(\nsg(M,p))] = \EE[\F(\nsg(p)] \]
and similarly for the embedding dimension. Thus our model allows us to prove results directly comparable with those of De Loera, O'Neill, and Wilburne.

\begin{theorem} \label{thm:main}
As $p$ tends to zero,  
    \begin{align*}
      \EE[\e(\nsg(p))] = O\left( \left( \ln \left( \frac{1}{p} \right) \right)^3 \right) \text{and}  \\
       \EE[\g(\nsg(p))] \leq \EE[\F(\nsg(p))] = O \left( \frac{1}{p} \left( \ln \left( \frac{1}{p} \right) \right)^3 \right).
    \end{align*}
\end{theorem}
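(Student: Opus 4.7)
The plan is to first establish the Frobenius bound $\EE[\F(\nsg(p))] = O((\ln(1/p))^3/p)$; the other two bounds follow by standard reductions. The genus bound is immediate from $\g(\nsg) \leq \F(\nsg)$. For the embedding dimension, note that if $a$ is a minimal generator of $\nsg$ and $m = \min\cA$, then $a - m \notin \nsg$, which forces $a \leq \F(\nsg) + m$; hence $\e(\nsg) \leq |\cA \cap [1, \F(\nsg) + m]|$. Since the indicator $\mathbf{1}_{a \in \cA}$ is monotone increasing in the configuration while $\F(\nsg) + m$ is monotone decreasing, the Harris--FKG inequality yields
\[ \EE[\e(\nsg)] \;\leq\; \sum_{a \geq 1} \Pr[a \in \cA,\ \F(\nsg) + m \geq a] \;\leq\; p \cdot \EE[\F(\nsg) + m] \;=\; O((\ln(1/p))^3), \]
using $\EE[m] = 1/p$ together with the Frobenius bound.

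To prove the Frobenius bound, I will apply the sumset theorem at a random prime modulus. Set $q_0 := C(\ln(1/p))^2/p$ for a sufficiently large constant $C$, and let $Q$ denote the set of primes in $[q_0, 2q_0]$; by the prime number theorem, $|Q| = \Theta((\ln(1/p))/p)$, so in particular $p|Q|$ tends to infinity. For each $q \in Q$, let $\cA_q := \cA \cap \{1, \dots, q-1\}$ and consider the \emph{sumset event} $S_q = \{k\cA_q = \ZZ_q \text{ for some } k = O(\ln q)\}$ and the \emph{hit event} $T_q = \{q \in \cA\}$. These events depend on disjoint coordinates of the Bernoulli process and are therefore independent, with $\Pr[T_q] = p$. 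Chernoff ensures $|\cA_q| = \Theta(qp) = \Theta((\ln(1/p))^2)$ with overwhelming probability; since this is far above the threshold $\ln q$ required by the sumset theorem, $\Pr[\neg S_q]$ is small with quantitative decay. By a two-stage union bound, first controlling $\Pr[\cA \cap Q = \emptyset] \leq (1-p)^{|Q|} = p^{\Omega(1)}$ and then bounding $\Pr[\exists q \in Q : T_q \wedge \neg S_q] \leq p|Q| \cdot \max_q \Pr[\neg S_q]$, some $q \in Q$ satisfies both $S_q$ and $T_q$ with probability $1 - o(1)$. For such $q$, we have $q \in \nsg$ and every residue class modulo $q$ is represented in $\nsg$ by an element of size at most $O(q\ln q)$, so $\Ap(\nsg, q) \subseteq [0, O((\ln(1/p))^3/p)]$ and $\F(\nsg) = O((\ln(1/p))^3/p)$.

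The main obstacle is that the sumset theorem by itself only guarantees coverage of residues modulo $q$, whereas the Ap\'ery bound on $\F$ additionally requires $q$ to lie in $\nsg$. I resolve this by enlarging $q_0$ from the minimal size $\Theta((\ln(1/p))/p)$ needed for the sumset theorem up to $\Theta((\ln(1/p))^2/p)$, making $|Q|$ large enough for $\cA \cap Q$ to be nonempty with high probability; this is precisely what produces the third logarithmic factor in the final bound rather than a quadratic one. Finally, the contribution of the complementary low-probability events to $\EE[\F]$ must be controlled; this can be done via the crude deterministic estimate $\F(\nsg) \leq m_1 m_2$ (with $m_1 < m_2$ the two smallest elements of $\cA$) together with the tail estimates $\Pr[m_i > t] = O(t^{i-1}(1-p)^{t-i+1})$, which show that the bad-event contribution is of lower order than our target.
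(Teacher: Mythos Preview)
Your core strategy for the Frobenius bound coincides with the paper's: wait for a prime $q\in\cA$ of size $\Theta\bigl((\ln(1/p))^2/p\bigr)$, observe that $|\cA\cap[1,q-1]|$ is well above the $\Theta(\log q)$ threshold, invoke the sumset theorem to cover all residues mod $q$ by elements of $\nsg$ of size $O(q\log q)$, and conclude via the Ap\'ery set. Your events $T_q,S_q$ and the two-stage union bound are a clean repackaging of the paper's events $D_1,D_2,D_3$.

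There is one genuine gap. The estimate $\F(\nsg)\le m_1m_2$ for the two smallest elements of $\cA$ is \emph{false} whenever $\gcd(m_1,m_2)>1$: then $\langle m_1,m_2\rangle$ is not even cofinite, and $\F(\nsg)$ depends on later elements in a way not bounded by $m_1m_2$. The paper handles the bad-event contribution differently: it waits for the first selected pair of \emph{consecutive} integers $2L,2L+1$ beyond the threshold $u(p)$, which are automatically coprime and give $\F\le 4L^2$. Crucially, $L$ depends only on $\cA\cap(u,\infty)$, hence is independent of $\{\F\ge u\}$, yielding directly $\EE[\F\mid\F\ge u]=O(p^{-4}+u^2)$. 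Even after you patch the coprimality issue (e.g.\ by replacing $m_2$ with the least element of $\cA$ coprime to $m_1$), your variables live below the threshold and are correlated with the bad event, so you would still need an explicit moment argument together with choosing the constant $C$ large enough that $\Pr[\text{bad}]=p^{C}$ beats $\EE[(m_1m_2)^2]=O(p^{-4})$; this works but should be spelled out.

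Your FKG argument for the embedding dimension is genuinely different from, and arguably slicker than, the paper's. The paper splits on $\{\F<u\}$ versus $\{\F\ge u\}$, bounding $\e(\nsg)$ in the first case by the binomial count $|\cA\cap[1,2u]|$ (since no minimal generator exceeds $2\F+1$) and in the second by $2\F$. Your monotonicity observation that $\F+m$ is decreasing while $\mathbf 1_{a\in\cA}$ is increasing avoids the case split entirely and gives $\EE[\e(\nsg)]\le p\,\EE[\F+m]$ in one line.
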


In particular, for both the Frobenius number and the embedding dimension, we reduce the asympototic ratio between the lower and upper bounds from order $1/p$ to order $\left(\log(1/p)\right)^3$. The same holds for the genus because it is trivially bounded by the Frobenius number.  

Our proofs will be purely probabilistic in the sense of not introducing auxiliary objects such as the simplicial complexes mentioned above. The difficulty in applying probabilistic methods to numerical semigroups (as compared, for example, to graphs or hypergraphs) is that elements of a random numerical semigroup $\nsg$ are created via two simultaneous processes, one probabilistic and the other algebraic. The first is the random selection of generators of $\nsg$ and the second is the creation of new elements of $\nsg$ as sums of the generators. 

Our proof strategy is to separate the two processes as follows. First, we show that with high probability, both a prime $q$ of an appropriate size (depending on $p$) and a set $\cA$ of roughly $\log(q)$ elements less than $q$ are selected as generators of $\nsg$. The set $\cA$ can be interpreted as a random subset of the cyclic group $\ZZ_q$, and we prove a result (Theorem \ref{thm:sumsets}) about $k$-fold sums of such subsets that may be of independent interest in additive number theory. We use this result to show that with high probability, the Frobenius number is bounded by roughly $\log^2(q)$.  We use simpler methods to bound the conditional expectation of the Frobenius number in the case that the high-probabilility events described above fail to occur. Finally, we adapt most of the same ideas to prove the bound on the expected embedding dimension.

  
The organization of the rest of the paper is as follows. In Section 2 we prove Theorem \ref{thm:sumsets} via the second moment method. In Section 3 we apply Theorem \ref{thm:sumsets} to show that in the model $\nsg(p)$, the Frobenius number is bounded by an appropriate function of $\frac{1}{p}$ with high probability. In Section 4 we complete the proof of Theorem \ref{thm:main}. In Section 5 we present the results of experiments that suggest that the true asymptotics of the expected Frobenius number and expected embedding dimension lie strictly between the lower bounds and our new upper bounds. More precisely, in both cases, the experiments point to exactly one factor of $\log(1/p)$ instead of zero (as in the lower bounds) or three (as in the upper bounds.) We end with some suggestions about how the upper bounds might be further improved.


\section{Sumsets of Random Subsets of $\ZZ_q$}

Let $\cA$ be a subset of an abelian group $G$. The \emph{$k$-fold sumset} is defined as
\[ k \cA = \{ a_1 + \dots + a_k \, | a_1, \dots, a_k \in \cA \}. \]
Sumsets are a central object of study in additive number theory (see for example Tao and Vu's book \cite{TaoVu}.) However, there are fewer results on sumsets of \emph{randomly chosen} sets $\cA \subset G$. Lee and Oh \cite{LeeOh} recently estimated the number of \emph{zero-free subsequences} of sumsets in this context. In this section, we will prove that if a random subset $\cA \subset \ZZ_q$ as well as $k$ are at least a large multiple of $\log(q)$, then with high probability $k \cA$ covers the entire group $\ZZ_q$.

\begin{theorem} \label{thm:sumsets}
  Let $q$ be a prime number and $\cA$ be a random subset of $\ZZ_q$ of size $2 \lceil b \log_2 q \rceil$. Then 
  \[ \Pr[ \left( b \log_2 q \right) \cA \neq \ZZ_q] \leq \frac{2b \log_2 q + 3}{q^{b-2}}.\]

\end{theorem}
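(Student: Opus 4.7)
The plan is to apply the second moment method to a carefully chosen random variable with independent slot-wise structure, rather than to the raw count $X_x := \#\{\vec a \in \cA^k : \sum a_i \equiv x\}$. Write $k := \lceil b\log_2 q \rceil$, so $|\cA| = 2k$, and consider a uniformly random partition of $\cA$ into $k$ pairs $\cA_1, \ldots, \cA_k$. For each $x \in \ZZ_q$ define
\[
Y_x := \#\{(a_1, \ldots, a_k) \in \cA_1 \times \cdots \times \cA_k : a_1 + \cdots + a_k \equiv x \pmod{q}\}.
\]
Since $\cA_i \subseteq \cA$, any nonzero contribution to $Y_x$ witnesses $x \in k\cA$, so Chebyshev gives $\Pr[x \notin k\cA] \leq \Pr[Y_x = 0] \leq \Var(Y_x)/\EE[Y_x]^2$, and a union bound over the $q$ residues reduces the theorem to bounding $q \cdot \max_x \Var(Y_x)/\EE[Y_x]^2$.

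The advantage of $Y_x$ is that the inclusion events $\{a_i \in \cA_i\}$ across different slots $i$ are nearly independent. Treating the $\cA_i$ as independent uniform $2$-subsets of $\ZZ_q$ (the actual model differs from this only by a $1 + O(k^2/q)$ correction accounting for pairwise disjointness), each of the $q^{k-1}$ tuples $\vec a$ summing to $x$ contributes probability $(2/q)^k$ to $\EE[Y_x]$, giving $\EE[Y_x] = 2^k/q$. For $\EE[Y_x^2]$, I would stratify ordered pairs $(\vec a, \vec b)$ by the number $r$ of slots in which $a_i = b_i$; independence across slots factors the joint inclusion probability as $(2/q)^r \, (2/(q(q-1)))^{k-r}$. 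A direct count shows the strata with $r < k$ together contribute $(2^k - 1) \cdot 2^k/q^2$, while the degenerate stratum $r = k$ (where $\vec a = \vec b$ and the two sum constraints $\sum a_i = \sum b_i = x$ collapse to one) contributes $q^{k-1} \cdot (2/q)^k = 2^k/q$. Subtracting $\EE[Y_x]^2 = 4^k/q^2$ yields $\Var(Y_x) = 2^k/q + O(1/q^2)$, and since $2^k = q^b$ when $k = b\log_2 q$, we get $\Var(Y_x)/\EE[Y_x]^2 = q/2^k = 1/q^{b-1}$; multiplying by the factor of $q$ from the union bound then gives the desired order $1/q^{b-2}$.

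The principal obstacle is the rigorous passage from the uniform $n$-subset model for $\cA$ (together with its random pairing) to the independent-pairs model. Under a uniform partition the pairs $\cA_i$ are pairwise disjoint by construction, so they are \emph{not} truly independent; their joint distribution is that of independent uniform $2$-subsets conditioned on pairwise disjointness, an event of probability $1 - O(k^2/q)$. Propagating this $1 + O(k^2/q)$ correction through both moments while isolating the dominant $r = k$ variance contribution is what produces the explicit prefactor $2b\log_2 q + 3$ in the theorem. A secondary subtlety is that $(b\log_2 q)\cA$ is written even when $b\log_2 q \notin \NN$; the natural interpretation $k = \lceil b\log_2 q\rceil$ matches the size $|\cA| = 2\lceil b\log_2 q\rceil$ and is consistent throughout the analysis above.
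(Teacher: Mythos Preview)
Your pairing approach is different from the paper's and can be made to work, but the plan you give for the step you call ``the principal obstacle'' would fail as written. You propose to pass from the independent-pairs model to the disjoint-pairs model by propagating a multiplicative $1+O(k^2/q)$ correction through both moments. The problem is that $\Var(Y_x)$ is a \emph{small difference of two large quantities}: in the independent model $\EE[Y_x^2]$ and $\EE[Y_x]^2$ are both of order $4^k/q^2$, while their difference is only of order $2^k/q$. A $1+O(k^2/q)$ perturbation of $\EE[Y_x^2]$ therefore introduces an additive error of order $k^2 4^k/q^3$ into the variance, and this swamps $2^k/q$ whenever $k^2 2^k \gg q^2$, i.e.\ precisely for $b>2$, the regime you need. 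The fix is not to touch the moments at all: since the disjoint model is the independent model conditioned on the disjointness event $D$ with $\Pr[D]=1-O(k^2/q)$, simply bound $\Pr_{\mathrm{disjoint}}[Y_x=0]\le \Pr_{\mathrm{indep}}[Y_x=0]/\Pr[D]$ and apply Chebyshev in the independent model. Done this way you obtain $q^{2-b}\bigl(1+O(k^2/q)\bigr)$ with no linear-in-$k$ prefactor, so your attribution of the factor $2b\log_2 q+3$ to the model correction is also incorrect.

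The paper's route avoids auxiliary randomness entirely. It applies the second moment method directly to $X_z=\#\{K\subseteq\cA:|K|=k,\ \sum_{t\in K}t=z\}$, counting $k$-element \emph{subsets} of $\cA$ rather than transversals of a pairing. One has $\EE[X_z]=\tfrac1q\binom{s}{k}$ exactly, and the covariance sum $\Delta=\sum_{K\ne L}\Pr[K,L\subseteq\cA]$ is stratified by $j=|K\cap L|$; with $s=2k$ each stratum satisfies $\Delta_j/\EE[X_z]^2\le q\binom{k}{k-j}/\binom{2k}{k}\le q/\binom{k}{k/2}$. The factor $2b\log_2 q+3$ in the statement then arises simply from summing these $k+1$ strata together with the $1/\EE[X_z]$ term --- not from any model comparison. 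Your approach, once repaired via the conditioning argument above, is arguably slicker in that it sidesteps this sum of strata, at the cost of the extra pairing layer.
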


\begin{rem}
  \begin{enumerate}
    \item Our proof uses combinatorial bounds on binomial coefficients in order that the inequalities hold for \emph{all} $q$. As $q$ tends to $\infty$ one could instead use Stirling's formula to obtain the improved asymptotic bound
  \[ \Pr \left( \lceil b \log_2 q \rceil \cA \neq \ZZ_q \right) \in  O \left( \frac{\ln q}{q^{2b-2}} \right). \]

\item Our proof actually bounds the probability that not every element of $\ZZ_q$ is a sum of  $b \log_2 q$ \emph{distinct} elements of $\cA$, so it yields a slightly stronger statement.
 \end{enumerate}
\end{rem}

To prove this theorem we will apply the second moment method for discrete random variables, as explained in Alon and Spencer's book \cite[\S 4.3]{AlSp}. If $X$ is any $\NN$-valued random variable then it follows from Chebyshev's Inequality that
\[ \Pr[X > 0] \leq \frac{\Var[X]}{\EE[X]^2}.\]
Suppose $X = \sum_{i=1}^m X_i$ is a sum of indicator variables. Let 
\[ \Delta = \sum_{i_1 \neq i_2} \Cov[X_{i_1}, X_{i_2}] = \sum_{i_1 \neq i_2} \Pr[A_i \land A_j] .\]
By decomposing $\Var[X]$ in terms of covariances it follows that
\begin{equation}
  \Pr[X > 0] \leq \frac{\EE[X]+ \Delta}{\EE[X]^2}. \label{eq:Delta}
\end{equation}

\begin{proof}[Proof of Theorem \ref{thm:sumsets}] Let $k \leq s \leq q$ and let $\cA$ be a uniformly random subset of $\ZZ_q$ of size $s$. For each $z \in \ZZ_q$, let
  \[N_z^k := \left\{K \subseteq \mathbb{Z}_q: |K| = k, \sum_{t \in K} t = z\right\}.\]
  Note that $|N_z^k| = \frac{1}{q}{\binom{q}{k}}$, since $K \in N_0^k$ if and only if $K + k^{-1}z \in N_z^k$ for every $z \in \mathbb{Z}_q$.

  For each $K \in N_z^k$, let $W_K$ be the indicator variable for the event $E_K$ that $K \subset \cA$, and define the random variable
  \[X_z = \sum_{K \in N_z^k} W_K\]
  which counts the number of subsets of $\cA \cap N_z^k$.  Since the sum of every $K \subset \cA$ is in $\mathbb{Z}_q$,
\[\sum_{z \in Z_q} X_z = {\binom{s}{k}},\]
and so
\[\binom{s}{k} = \EE\left[\sum_{z \in Z_q} X_z\right] =  \sum_{z \in Z_q} \EE[X_z].\]\par
As in the calculation of $|N_z^k|$, for every $z \in \mathbb{Z}_q$, 
\[\EE[X_0] = \sum_{K \in N_0^k}\EE[W_K] = \sum_{K \in N_0^k} \EE[W_{K + k^{-1}z}] = \sum_{K \in N_z^k} \EE[W_K] = \EE[X_z].\]
Therefore, we have that
\begin{equation} \label{eq:upperbound:expected}
\EE[X_z] = \frac{1}{q} {\binom{s}{k}}.
\end{equation}

We next need to bound the variance of $X_z$. For each $0 \leq j \leq k$, let
\[\Delta_j := \sum_{\substack{|K \cap L| = j \\ L \in N_z^K}} \Pr[E_K \land E_L]\]
so that $\Delta = \sum_{j=0}^k \Delta_j$. We can bound the number of events for which $|K \cap L| = j$. First we choose $K$ as any set in $N_z^k$ and then we choose the remaining $k- j$ elements of $L$ as any subset of $\mathbb{Z}_q \setminus K$ with size $k - j$ (so that the choices of $L$ include every set in $N_z^k$ as well as others.) Thus, 
\[\Delta_j \leq \frac{1}{q}\binom{q}{k}\binom{q - k}{k - j} \frac{\binom{q - 2k + j}{s - 2k + j}}{\binom{q}{s}}.\]

This implies that, using (\ref{eq:upperbound:expected}),
\begin{align*}
    \frac{\Delta_j}{\EE[X_z]^2} &\leq \frac{\binom{q}{k} \binom{q - k}{k - j}\binom{q - 2k + j}{s - 2k + j}}{\frac{1}{q} \binom{s}{k}\frac{1}{q} \binom{s}{k}q\binom{q}{s}} \\
    &= \frac{\frac{q!}{(q - k)!k!}\frac{(q - k)!}{(k - j)!(q - 2k + k)!}\frac{(q - 2k + j)!}{(s - 2k + j)!(q - s)!}}{\frac{1}{q}\binom{s}{k}\frac{s!}{(s - k)!k!}\frac{q!}{(q - s)!s!}} \\
    &= \frac{q\binom{s - k}{k - j}}{\binom{s}{k}}.
\end{align*}

In particular, if $k$ is even and $s=2k$, then $\binom{s - k}{k - j} = \binom{k}{k-j}$ is maximized when $j=k/2$. Thus
\[ \frac{\Delta_j}{\EE[X_z]^2} \leq \frac{\binom{k}{k/2}}{q \binom{2k}{k}} \leq \frac{\binom{k}{k/2}}{\binom{k}{k/2}\binom{k}{k/2}} \leq \frac{q}{2^{\frac{k}{2}}}.\]

  For $k=2 \lceil b\log_2 q \rceil$, this yields
  \[ \frac{\Delta_j}{\EE[X_z]^2} \leq \frac{q}{2^{\lceil b \log_2 q \rceil}} \leq \frac{q}{2^{b \log_2 q }} = \frac{q}{q^{b}} = \frac{1}{q^{b-1}}.\]
  For the same value of $k$ we also have
\[ \EE[X_z] = \frac{1}{q} \binom{2k}{k} > \frac{2^k}{q} \geq \frac{q^{2b}}{q}=q^{2b-1}.\] 
  Thus, using (\ref{eq:Delta}) we obtain
  \begin{align*} \Pr[X_z = 0] & \leq \frac{\EE[X_z]+\Delta}{\EE[X_z]^2} \\
     = \frac{1}{\EE[X_z]} + \sum_{j=0}^k\frac{\Delta_j}{\EE[X_z]^2} & \leq \frac{1}{q^{2b-1}}+\frac{k+1}{q^{b-1}} \\
      \leq \frac{k+2}{q^{b-1}} & = \frac{2b \log_2 q + 3}{q^{b-1}}
    \end{align*}
 
  and so by the union bound,
  \[ \Pr \left[ \bigvee_{z \in \ZZ_q} X_z = 0 \right] \leq q \left( \frac{2b \log_2 q + 3}{q^{b-1}}\right) = \frac{2b \log_2 q + 3}{q^{b-2}}. \qedhere \]
\end{proof}


\section{Ap\'ery sets and a typical generation process}
In this section we will describe a series of events that imply a bound on the expected Frobenius number and expected dimension of a numerical semigroup chosen via the model $\nsg(p)$, and then bound the probability that any of these events fail to occur. The key to this bound will be Theorem \ref{thm:sumsets}.

Let $\nsg$ be a numerical semigroup. For each $m \in \nsg \setminus \{0\}$, the \emph{Ap\'ery set} of $m$ in $\nsg$ is defined to be
\[ \Ap(\nsg, m) = \{x \in \nsg \, | \, x - m \notin \nsg \}.\]
For each $i = 0,1,2\dots,m-1$, the largest gap $y$ of $\nsg$ such that $y \equiv i$ (mod $m$), if there is any such gap, equals $x - m$ where $x$ is the unique element of $\Ap(\nsg, m)$ such that $y \equiv i$ (mod $m$). It follows that for any $m \in \nsg \setminus \{0\}$,
\begin{equation} \F(\nsg) = \max \{ \Ap(\nsg, m) \} - m < \max \{ \Ap(\nsg, m) \}. \label{eq:Apery} \end{equation}



Define
\[ f(p) = \frac{1}{p} \left( \ln \left( \frac{1}{p} \right) \right)^2 \]
and consider the following events for a semigroup $\nsg$ chosen via the random model $\nsg(p)$.
 \begin{itemize}
\item Define $D_1$ to be the event that at least one prime number between $f(p)+1$ and $6f(p)$ is selected as a generator of $\nsg$. 
\item Assuming $D_1$, let $q$ be the largest prime selected in this range. Define $D_2$ to be the event that $D_1$ occurs and also that at least $12 \log_2 q$ generators are selected between 1 and $q-1$. 
\item Assuming $D_2$, let $\nsg'$ be the subsemigroup of $\nsg$ generated by a random subset of $12 \log_2q$ of the generators of $\nsg$ between 1 and $q$, along with $q$ itself. Let $D_3$ be the event that $D_2$ occurs and also that the largest element of the Ap\'ery set of $\nsg'$ with respect to $q$ is at most $6q \log_2 q$.
 \end{itemize}

If $D_3$ holds, then the semigroup $\nsg'$ defined in $D_3$ contains $\nsg$. Using (\ref{eq:Apery}) and the upper bound on $q$, we can bound the Frobenius number of $\nsg$ by 
\begin{equation} \F(\nsg) \leq \F(\nsg') \leq 6q \log_2 q \leq 36f(p) \log_2(6f(p)) \leq K \left(\frac{1}{p} \left( \ln \left(\frac{1}{p}\right)\right)^3  \right). \label{eq:u-function}
\end{equation}
for a sufficiently large constant $K$.

To bound the probability that $D_3$ does not hold, we begin by analyzing $D_1$ via classical number theory.  

\begin{proposition} \label{prop:notD1}
 The probability that $D_1$ does not hold is $o\left(p^4\right)$. 
\end{proposition}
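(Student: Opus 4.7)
The plan is to count the primes in the interval $[f(p)+1, 6f(p)]$ and then use the independence of the selection events to bound the probability that none of them is chosen as a generator.

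First, by the Prime Number Theorem, $\pi(x) \sim x/\ln x$ as $x \to \infty$. Since $p \to 0$ forces $f(p) \to \infty$, we obtain
\[ \pi(6f(p)) - \pi(f(p)) \sim \frac{6f(p)}{\ln(6f(p))} - \frac{f(p)}{\ln f(p)} \sim \frac{5 f(p)}{\ln f(p)}. \]
Using $f(p) = \frac{1}{p}(\ln(1/p))^2$ and $\ln f(p) = \ln(1/p) + 2\ln \ln(1/p) \sim \ln(1/p)$, this simplifies to
\[ N := \pi(6f(p)) - \pi(f(p)) \sim \frac{5 \ln(1/p)}{p} \]
as $p \to 0$. (Any effective PNT bound such as Rosser--Schoenfeld gives the same asymptotic with explicit constants, which avoids any appeal to asymptotics-only arguments.)

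Second, since each integer in $[f(p)+1, 6f(p)]$ is included in the generating set independently with probability $p$, the probability that no prime in the interval is selected is exactly $(1-p)^N$. Using the standard inequality $1-p \leq e^{-p}$,
\[ \Pr[\overline{D_1}] = (1-p)^N \leq e^{-pN}. \]
From the asymptotic above, $pN \sim 5 \ln(1/p)$, so for every $\varepsilon > 0$ and $p$ sufficiently small we have $pN \geq (5-\varepsilon) \ln(1/p)$, giving
\[ \Pr[\overline{D_1}] \leq e^{-(5-\varepsilon)\ln(1/p)} = p^{5-\varepsilon}, \]
which is $o(p^4)$.

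The only genuinely delicate step is the first one: ensuring a quantitative lower bound on the number of primes in $[f(p)+1, 6f(p)]$ that is uniform in $p$ as $p \to 0$. The asymptotic form of PNT is more than enough, but if one wants a proof that is fully elementary and explicit for all small $p$, one can instead invoke any classical effective version (Chebyshev's bounds, or Rosser--Schoenfeld's inequalities) to produce a constant $c > 0$ and a threshold $p_0$ such that $N \geq c \cdot f(p)/\ln f(p)$ for $p \leq p_0$; the constant $c$ only needs to be large enough that $cpN \geq 4 \ln(1/p)$ eventually, which is automatic because $pN$ grows like $\ln(1/p)$ times a large constant.
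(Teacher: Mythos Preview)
Your proof is correct and follows essentially the same approach as the paper: count primes in $[f(p)+1,6f(p)]$ via the Prime Number Theorem to get $N \sim \tfrac{5}{p}\ln(1/p)$, bound $(1-p)^N \le e^{-pN}$, and conclude $\Pr[\lnot D_1] \le p^{5-\varepsilon} = o(p^4)$. The paper likewise remarks afterward that an effective version of the PNT (it cites Rosser) would make the estimate explicit, matching your closing comment.
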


\begin{proof}
  Let $\pi(x)$ denote the number of primes less than or equal to $x$. The prime number theorem states that as $x$ tends to $\infty$, $\pi(x) = \frac{x}{\ln x} \left( 1 + o(1) \right)$. It follows that for any constant $c > 1$,
  \[ \pi(cx) - \pi(x) = \frac{(c-1)x}{\ln x} \left( 1 \pm o(1) \right). \]
  Let $N(p)$ be the number of primes $q$ such that $f(p)+1 \leq q \leq 6f(p)$. Since $f(p)$ tends to infinity as $p$ tends to zero, we have  

\[ N(p) = \frac{\frac{5}{p} \left( \ln \left( \frac{1}{p} \right) \right)^2}
    {\ln \left( \frac{1}{p} \left( \ln \left( \frac{1}{p} \right) \right)^2 \right)}
    \left( 1 \pm o(1) \right) = \frac{\frac{5}{p} \left( \ln \left( \frac{1}{p} \right) \right)^2}
         {\ln \left( \frac{1}{p} \right) \left( 1 + o(1) \right)}
         \left( 1 \pm o(1) \right) \\
        = \frac{5}{p} \ln \left( \frac{1}{p} \right) \left( 1 \pm o(1) \right).\]

  It follows that
  \[ \Pr\left[ \lnot D_1 \right] = (1-p)^{N(p)} \leq e^{N(p)} = e^{\frac{5}{p} \ln \left( \frac{1}{p} \right) \left( 1 \pm o(1) \right)} 
    = p^{5 \left( 1 \pm o(1) \right)}  = o\left(p^4\right).\]
\end{proof}

\begin{rem}
  We could make the bound on $\Pr(\lnot D_1)$ effective by applying an effective version of the prime number theorem, such as Rosser's classical result \cite{rosser1941} that for all $x \geq 55$,
  \[ \frac{x}{\ln x + 2} < \pi(x) < \frac{x}{\ln x - 4}.\]
\end{rem}
\quad

Next, to analyze the event $D_2$, assume that $D_1$ holds and define $q$ to be the largest prime between $f(p)$ and $cf(p)$ that is selected as a generator. Let $G$ be the number of selected generators between 1 and $q-1$. 

\begin{lemma} \label{lem:expectedG}
$\EE[G] = (q-1)p$. 
\end{lemma}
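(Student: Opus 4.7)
The plan is to show that the lemma follows directly from the mutual independence of the selection indicators in the unconstrained ER-type model. Since $q$ itself is a random variable, I read the identity as the statement $\EE[G \mid q] = (q - 1)p$, so that once the value of $q$ is fixed the variable $G$ becomes a sum over a deterministic range and is binomially distributed with parameters $q-1$ and $p$.

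Concretely, I would fix an admissible prime value $q_0$ for $q$ and write the event $\{q = q_0\}$ as the conjunction of two conditions: $q_0$ itself is selected as a generator, and no prime $r$ with $q_0 < r \leq 6f(p)$ is selected. Both conditions are expressed entirely in terms of the selection indicators $X_n$ with $n \geq q_0$. Because the indicators $X_1, X_2, \dots$ are mutually independent in the model $\nsg(p)$, conditioning on $\{q = q_0\}$ leaves the joint law of $(X_1, \dots, X_{q_0 - 1})$ unchanged, and these remain independent Bernoulli$(p)$ random variables.

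Since $G = X_1 + \cdots + X_{q_0 - 1}$ on the event $\{q = q_0\}$, linearity of expectation then yields $\EE[G \mid q = q_0] = (q_0 - 1)p$, which is the claim. There is no serious obstacle here; the only subtlety is bookkeeping about what the conditioning event actually controls, since $q$ appears inside the range of summation defining $G$ as well as on the right-hand side. I would therefore be explicit that the event $\{q = q_0\}$ only involves selection indicators at positions $\geq q_0$, which is precisely what decouples it from the variables that get summed to form $G$.
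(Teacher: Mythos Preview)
Your proposal is correct and is essentially the paper's own argument: both condition on the value $q=q_0$ and verify that the selection indicators below $q_0$ retain expectation $p$. The paper splits into the cases $\ell<f(p)$ or $\ell$ non-prime (trivially independent of $q$) versus $\ell=q_i$ a prime in the window (where it computes $\EE[G_{q_i}\mid q=q_j]=p$ explicitly), whereas you compress this into the single observation that the event $\{q=q_0\}$ is measurable with respect to $\{X_n:n\geq q_0\}$ and hence independent of $(X_1,\dots,X_{q_{0}-1})$; this is exactly the content of the paper's computation, and it also makes transparent why the \emph{largest} prime is the right choice, as flagged in the paper's remark.
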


\begin{rem}
The reason that this may not be obvious is that $q$ itself is a random variable that depends on which generators were selected. Indeed, if we had defined $q$ to be the smallest prime or a random prime rather than the \emph{largest} prime among the generators between $f(p)+1$ and $cf(p)$, then this statement would not hold.     
\end{rem}

\begin{proof}
  Write $G = \sum_{\ell=1}^{q-1} G_\ell$ where $G_\ell$ is an indicator variable for $\ell$ being selected as a generator. If $\ell < f(p)$ or if $\ell$ is not prime, then it is clear that $G_\ell$ is independent of the choice of $q$, so $\EE[G_{\ell}]=p$. For the remaining cases, let $q_1 < q_2 < \dots < q_n$ be the primes between $f(p)$ and $cf(p)$. For any $1 \leq i < j \leq n$,
  \[ \EE[G_{q_i} | q = q_j] = \frac{\Pr\left[ (\textup{select }q_i) \land (q = q_j)\right]}{\Pr[q = q_j]} = \frac{p^2(1-p)^{n-j}}{p(1-p)^{n-k}} = p,\]
  so the result follows by linearity of expectation. 
\end{proof}

\begin{proposition} \label{prop:notD2} The conditional probability $\Pr[\lnot D_2 | D_1]$ is $O\left(p^{\frac{1}{8} \ln \left( \frac{1}{p} \right)}\right)$. In particular, it decays faster than any fixed power of $p$. 
\end{proposition}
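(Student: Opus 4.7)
The plan is to reduce the problem to a Chernoff bound on a binomial by conditioning on the exact value of $q$. Fix a prime $q^*$ in the range $[f(p)+1,\, 6f(p)]$ and consider $\Pr[\lnot D_2 \mid q = q^*]$. The event $q = q^*$ constrains two things: it forces $q^*$ to be a selected generator, and it forces every prime in the range strictly greater than $q^*$ to \emph{not} be a generator. Crucially, it is independent of whether any element of $\{1, \ldots, q^*-1\}$ is chosen as a generator, whether that element is composite, a prime outside $[f(p)+1,\, 6f(p)]$, or a prime below $q^*$ inside the range. Mirroring the conditional-probability computation from the proof of Lemma \ref{lem:expectedG}, I expect this to show that $G \mid (q = q^*)$ is distributed as Binomial$(q^*-1,\, p)$.

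Next I would compare the mean $\mu := (q^*-1)p$ to the target $12 \log_2 q^*$. Since $q^* \geq f(p)+1$, we have
\[ \mu \geq f(p)\cdot p = \left(\ln \tfrac{1}{p}\right)^2, \]
whereas $12 \log_2 q^* \leq 12 \log_2 (6 f(p)) = O\!\left( \ln \tfrac{1}{p}\right)$. For $p$ sufficiently small, these estimates give $12 \log_2 q^* \leq \mu / 2$, so the multiplicative deviation $\delta := 1 - 12 \log_2(q^*)/\mu$ satisfies $\delta \geq 1/2$.

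The standard Chernoff lower-tail inequality for a binomial then yields
\[ \Pr\!\left[ G \leq 12\log_2 q^* \mid q = q^* \right] \leq \exp\!\left(-\tfrac{\delta^2 \mu}{2}\right) \leq \exp\!\left( -\tfrac{\mu}{8}\right) \leq \exp\!\left(-\tfrac{1}{8}\left(\ln \tfrac{1}{p}\right)^2 \right) = p^{(1/8)\ln(1/p)}. \]
Because this estimate is uniform in $q^*$, averaging over the conditional law of $q$ given $D_1$ produces the same bound on $\Pr[\lnot D_2 \mid D_1]$. Since $\tfrac{1}{8}\ln(1/p) \to \infty$ as $p \to 0$, this decays faster than any fixed power of $p$, as claimed.

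The main obstacle I anticipate is the conditional-independence step: one must carefully verify that conditioning on $q = q^*$ does not surreptitiously bias the generator status of elements in $\{1, \ldots, q^*-1\}$, but only constrains the primes strictly above $q^*$ in the range. Once that is established exactly as in Lemma \ref{lem:expectedG}, the remainder is a routine Chernoff calculation, and the constant $1/8$ is conservative — a more careful use of $\delta \to 1$ would give essentially $p^{(1/2)\ln(1/p)}$.
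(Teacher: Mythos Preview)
Your proof is correct and follows essentially the same route as the paper: both condition on the value of $q$ (you do so explicitly, the paper via Lemma~\ref{lem:expectedG}), observe that $G$ is then binomial with mean at least $(\ln(1/p))^2$ while the threshold $12\log_2 q$ is only $O(\ln(1/p))$, and apply the same Chernoff lower-tail bound with $\lambda=\mu/2$ to obtain $e^{-\mu/8}\le p^{(1/8)\ln(1/p)}$. Your treatment of the conditional independence is in fact slightly more explicit than the paper's, but the argument is otherwise identical.
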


\begin{proof}
As in Lemma \ref{lem:expectedG}, let $G$ be the number of selected generators between $1$ and $q-1$. By this lemma we have 
\[\EE[G] = p(q-1)  \geq pf(p) = \left( \ln \left( \frac{1}{p} \right) \right)^2.\]
On the other hand,
\[ 12 \ln q \leq 12 \ln \left( \frac{5}{p} \left( \ln \left( \frac{1}{p} \right) \right)^2 \right) = O \left(\ln \left( \frac{1}{p} \right)  \right).\]
Thus for sufficiently small $p$, we have $12 \ln q < \frac{\EE[G]}{2}$. 
We apply the Chernoff bound
\[        \Pr[G \leq \EE[G] - \lambda] \leq e^{-\frac{\lambda^2}{2\EE[G]}} \]
with $\lambda = \frac{\EE[G]}{2}$ to obtain
\[ \Pr[\lnot D_2 | D_1] \leq \Pr\left[G \leq \frac{\EE[G]}{2}\right] \leq e^{-\frac{\EE[G]}{8}} \leq e^{\frac{-1}{8} \left( \ln \left( \frac{1}{p} \right) \right)^2} = p^{\frac{1}{8} \ln \left( \frac{1}{p} \right)}. \qedhere \]
\end{proof}

\begin{rem}
To get a significantly faster decay in Proposition \ref{prop:notD2} in practice, we could select an appropriate constant $d$ and modify $f(p)$ by replacing both occurrences of $\frac{1}{p}$ by $\frac{d}{p}$. The constant $\frac{1}{8}$ in the exponent would become $\frac{d}{8}$. The tradeoff is that the bound on the Frobenius number in (\ref{eq:u-function}) becomes somewhat weaker. We could also apply Chernoff more tightly for appropriately small values of $p$. 
\end{rem}

We now assume that events $D_1$ and $D_2$ both hold, so that we can use Theorem \ref{thm:sumsets} in order to analyze $D_3$.

\begin{proposition} \label{prop:notD3}
  The conditional probability $Pr[\lnot D_3 | D_1 \land D_2]$ is $o\left(p^4\right)$. 
\end{proposition}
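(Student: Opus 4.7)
The plan is to apply Theorem \ref{thm:sumsets} with $b = 6$ to the set $\cA$ of $12 \log_2 q$ generators selected between $1$ and $q - 1$, viewed modulo $q$. First I would verify that the reduction of $\cA$ modulo $q$ is uniformly distributed over subsets of $\ZZ_q \setminus \{0\}$ of the required size. Conditionally on the value of $q$, the event ``$q$ is the largest selected prime in $[f(p)+1, 6f(p)]$'' only restricts selections of primes in $(q, 6f(p)]$ and imposes no constraint on $\{1,\ldots,q-1\}$; there, selection remains Bernoulli$(p)$ and mutually independent. Conditioning further on the count being at least $12 \log_2 q$ and then on the uniformly random choice of $12\log_2 q$ among those selected, $\cA$ is uniformly distributed over subsets of $\{1,\ldots,q-1\}$ of size $12\log_2 q$, and reduction modulo $q$ gives a uniformly random subset of $\ZZ_q \setminus \{0\}$.

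To apply Theorem \ref{thm:sumsets}, whose hypotheses concern subsets of $\ZZ_q$ that may contain $0$, I would use a simple conditioning bridge: if $\cA'$ is uniform of size $12\log_2 q$ in $\ZZ_q$, then $\Pr[0 \notin \cA'] = (q - 12\log_2 q)/q$, and conditional on this event $\cA'$ has the same law as $\cA$. Hence
\[ \Pr\bigl[(6\log_2 q)\,\cA \neq \ZZ_q\bigr] \;\leq\; \frac{q}{q - 12\log_2 q}\cdot\frac{12 \log_2 q + 3}{q^{4}} \;=\; O\!\left(\frac{\log q}{q^4}\right), \]
since the first factor tends to $1$ as $p \to 0$.

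Now if $(6\log_2 q)\cA = \ZZ_q$, then every residue class $z \in \ZZ_q$ contains an element of $\nsg'$ of the form $a_1 + \cdots + a_{6\log_2 q}$ with each $a_i \in \cA$ (viewed as a positive integer in $[1, q-1]$); such a sum is a positive integer strictly less than $6q\log_2 q$. Since the Apéry element of $\nsg'$ in each residue class is the minimum element of $\nsg'$ in that class, we conclude $\max \Ap(\nsg', q) < 6q \log_2 q$, which is exactly $D_3$. Finally, since $q > f(p) = \frac{1}{p}(\ln(1/p))^2$, we have $\log(q)/q^4 = O\!\bigl(p^4 \log(1/p)/(\log(1/p))^8\bigr) = o(p^4)$, as required. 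The main obstacle is the first step: the dependence of $q$ on the random generator selection must be carefully tracked in order to justify treating $\cA$ as a uniformly random subset of $\ZZ_q \setminus \{0\}$ for the purposes of Theorem \ref{thm:sumsets}.
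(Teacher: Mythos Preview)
Your proof is correct and follows essentially the same approach as the paper: apply Theorem~\ref{thm:sumsets} with $b=6$ and use the lower bound $q>f(p)$ to conclude the bound is $o(p^4)$. In fact you are more careful than the paper's short proof on several points it leaves implicit---namely, why the chosen set $\cA$ is uniformly distributed (exploiting that $q$ is the \emph{largest} selected prime, in the spirit of Lemma~\ref{lem:expectedG}), the minor bridge from subsets of $\ZZ_q\setminus\{0\}$ to subsets of $\ZZ_q$, and the passage from $(6\log_2 q)\cA=\ZZ_q$ to the Ap\'ery bound---so your argument actually strengthens the exposition.
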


\begin{proof}
  By applying Theorem \ref{thm:sumsets} with $b=6$ along with the lower and upper bounds on the prime $q$, we have
  \[ \Pr[ \lnot D_3 | D_1 \land D_2] \leq \frac{12 \log_2 q + 3}{q^4}
  \leq \frac{12 \log_2 \left( \frac{6}{p} \left( \ln \left( \frac{1}{p} \right) \right)^2 \right) + 3 }{\left( \frac{1}{p} \left( \ln \left( \frac{1}{p} \right) \right)^2 \right)^4 }
       = O \left( \frac{\ln \left( \frac{1}{p} \right)}{\left( \frac{1}{p} \right)^4 \left( \ln \left( \frac{1}{p} \right) \right)^8 } \right) 
       \]
       which is $o\left(p^4\right)$. 
\end{proof}

\begin{corollary} \label{cor:notD3}
  The probability $\Pr[\lnot D_3]$ that the Ap\'ery set process fails is $o\left(p^4\right)$. 
\end{corollary}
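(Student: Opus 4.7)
The plan is to derive this corollary directly from the three preceding propositions by decomposing the event $\lnot D_3$ according to which of the nested events $D_1 \supseteq D_2 \supseteq D_3$ is the first to fail. By construction, $D_3$ implies $D_2$ which implies $D_1$, so we may write
\[
\lnot D_3 \;=\; \lnot D_1 \;\cup\; (D_1 \land \lnot D_2) \;\cup\; (D_1 \land D_2 \land \lnot D_3),
\]
and the three pieces on the right are disjoint.

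Next I would apply the union bound together with the definition of conditional probability to get
\[
\Pr[\lnot D_3] \;\leq\; \Pr[\lnot D_1] \;+\; \Pr[\lnot D_2 \mid D_1]\,\Pr[D_1] \;+\; \Pr[\lnot D_3 \mid D_1 \land D_2]\,\Pr[D_1 \land D_2],
\]
and then simply drop the factors $\Pr[D_1]$ and $\Pr[D_1 \land D_2]$, which are at most $1$. The remaining bound is
\[
\Pr[\lnot D_3] \;\leq\; \Pr[\lnot D_1] \;+\; \Pr[\lnot D_2 \mid D_1] \;+\; \Pr[\lnot D_3 \mid D_1 \land D_2].
\]

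To finish, I would invoke the three preceding estimates: Proposition \ref{prop:notD1} gives $\Pr[\lnot D_1] = o(p^4)$; Proposition \ref{prop:notD2} gives $\Pr[\lnot D_2 \mid D_1] = O\bigl(p^{(1/8)\ln(1/p)}\bigr)$, which decays faster than any fixed power of $p$ and in particular is $o(p^4)$; and Proposition \ref{prop:notD3} gives $\Pr[\lnot D_3 \mid D_1 \land D_2] = o(p^4)$. Summing three terms that are each $o(p^4)$ yields the desired bound.

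There is no real obstacle here: the whole statement is a bookkeeping consequence of chaining the three conditional estimates. The only point requiring a moment of care is that $D_2$ and $D_3$ were defined to be intersections (each includes the previous event), so the decomposition of $\lnot D_3$ above is genuinely exhaustive and the conditional probabilities match the hypotheses of Propositions \ref{prop:notD2} and \ref{prop:notD3} exactly.
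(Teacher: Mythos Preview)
Your proposal is correct and follows essentially the same argument as the paper: decompose $\lnot D_3$ into the three disjoint pieces $\lnot D_1$, $D_1 \land \lnot D_2$, $D_1 \land D_2 \land \lnot D_3$, bound each joint probability by the corresponding conditional probability, and invoke Propositions~\ref{prop:notD1}, \ref{prop:notD2}, and~\ref{prop:notD3}. The only difference is that you spell out the intermediate step of writing the conditional probabilities with their $\leq 1$ multipliers before dropping them, which the paper leaves implicit.
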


\begin{proof}
 We calculate 
 \begin{align*} \Pr[\lnot D_3] & = \Pr[\lnot D_1] + \Pr[D_1 \land \lnot D_2] + \Pr[D_1 \land D_2 \land \lnot D_3] \\
   & \leq \Pr[\lnot D_1] + \Pr[\lnot D_2 | D_1] + \Pr[\lnot D_3 | D_1 \land D_2] \end{align*}
  and all three terms are $o\left(p^4\right)$ by Propositions \ref{prop:notD1}, \ref{prop:notD2}, and \ref{prop:notD3}.
\end{proof}


\section{Proof of the Main Theorem}
To prove Theorem \ref{thm:main}, we will combine Corollary \ref{cor:notD3} with an alternative bound on the Frobenius number when the event $D_3$ does not hold, as follows. It is well known (\cite[Proposition 2.13]{rosales2009numerical}) that for any coprime numbers $n_1, n_2$, the Frobenius number of the semigroup $\langle n_1, n_2 \rangle$ is exactly $(n_1-1)(n_2-1)-1$. In particular, as soon as two consecutive numbers $2n, 2n+1$ (the first one even) are selected as generators of $\nsg$, then
\begin{equation} \F(\nsg) \leq \F\left( \langle 2n, 2n+1 \rangle \right) = (2n-1)(2n) - 1 < 4n^2. \end{equation}

\begin{lemma} \label{lem:unlikelycase}
  Given any $u = u(p)$,
  \[ \EE[\F(\nsg) | \F(\nsg) \geq u] = \frac{8}{p^4} + \frac{4u}{p^2} + u^2 .\]
\end{lemma}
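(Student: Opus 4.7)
The plan is to search for the first pair of consecutive integers above $u$ that are both selected as generators, and to use the two-generator Frobenius formula (already recalled just before the lemma) together with the second moment of a geometric random variable.

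For each integer $n \geq 1$, let $J_n$ be the indicator of the event that both $u + 2n - 1 \in \cA$ and $u + 2n \in \cA$. The pairs $\{u + 2n - 1, u + 2n\}$ for $n = 1, 2, \dots$ are disjoint, so the $J_n$ are mutually independent Bernoulli random variables with parameter $p^2$, and $T := \min\{n \geq 1 : J_n = 1\}$ is geometric with parameter $p^2$, giving $\EE[T] = 1/p^2$ and $\EE[T^2] = (2-p^2)/p^4 \leq 2/p^4$. On the almost sure event $\{T < \infty\}$, the consecutive (and therefore coprime) integers $u + 2T - 1$ and $u + 2T$ are both selected generators, so $\nsg \supseteq \langle u + 2T - 1, u + 2T \rangle$ and the formula $\F(\langle a,b\rangle) = (a-1)(b-1) - 1$ gives the pointwise bound
\[ \F(\nsg) \leq (u + 2T - 2)(u + 2T - 1) - 1 < (u + 2T)^2. \]
Expanding and applying linearity of expectation yields
\[ \EE[(u + 2T)^2] = u^2 + 4u\,\EE[T] + 4\,\EE[T^2] \leq u^2 + \frac{4u}{p^2} + \frac{8}{p^4}, \]
which is precisely the right-hand side of the statement.

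The main obstacle I expect is passing from this unconditional moment to the conditional expectation $\EE[\F(\nsg) \mid \F(\nsg) \geq u]$. The variable $T$ is measurable with respect to $\cA \cap \{u+1, u+2, \dots\}$, whereas $\{\F(\nsg) \geq u\}$ depends on all of $\cA$; moreover, since both $\{\F(\nsg) \geq u\}$ and $\{T \geq k\}$ are decreasing events in $\cA$, an FKG argument gives $\EE[T^2 \mid \F(\nsg) \geq u] \geq \EE[T^2]$, i.e., the conditioning pushes in the wrong direction. The cleanest route I would take is to split on whether $u \in \langle \cA_1 \rangle$, where $\cA_1 := \cA \cap \{1, \dots, u\}$: if $u \notin \langle \cA_1 \rangle$ then every new element of $\nsg$ contributed by $\cA_2 := \cA \cap \{u+1, \dots\}$ exceeds $u$, so $u$ remains a gap of $\nsg$ regardless of $\cA_2$; on this sub-event $\{\F(\nsg) \geq u\}$ is measurable with respect to $\cA_1$ and hence independent of $T$, so the unconditional moment transfers cleanly. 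The complementary sub-event $\{u \in \langle \cA_1 \rangle\} \cap \{\F(\nsg) \geq u\}$ must either be shown to be absorbed into the slack $(2-p^2)/p^4 \leq 2/p^4$ that we already discarded, or handled by reading the lemma's ``$=$'' as a bound on $\EE[\F(\nsg)\,\mathbf{1}_{\F(\nsg) \geq u}]$, which is all that is actually needed when it is combined with Corollary \ref{cor:notD3} in the proof of Theorem \ref{thm:main}.
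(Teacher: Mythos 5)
Your computation is, in substance, exactly the paper's: the paper sets $L$ to be the smallest integer with $L > u/2$ such that $2L$ and $2L+1$ are both selected, bounds $\F(\nsg) \le \F(\langle 2L, 2L+1\rangle) < 4L^2$ pointwise, and evaluates the same geometric second moment (via a sum-to-integral comparison rather than your closed form), arriving at the identical expression $\frac{8}{p^4} + \frac{4u}{p^2} + u^2$. So the core of your argument matches the paper's, and your moment bound is correct.

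The conditioning issue you raise is the one genuinely delicate point, and you should know how the paper handles it: it asserts that the event $\{\F(\nsg) \ge u\}$ ``depends only on the generators less than or equal to $u$'' and is therefore independent of $L$. Your suspicion of this step is well founded. The event is not determined by $\cA \cap [1,u]$: a gap $y \ge u$ of $\langle \cA \cap [1,u]\rangle$ can be filled by a generator in $(u,y]$ (for instance, if $\cA \cap [1,10] = \{4,5\}$ and $u=10$, then $\{\F(\nsg) \ge u\}$ coincides with $\{11 \notin \cA\}$). Moreover, as you observe, $\{\F(\nsg)\ge u\}$ and $\{T \ge k\}$ are both decreasing events, so Harris/FKG gives positive correlation, which is the wrong direction for the desired upper bound. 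Your split on whether $u \in \langle \cA \cap [1,u]\rangle$ repairs half of this (that sub-event is measurable with respect to the small generators and forces $\F(\nsg)\ge u$), but the complementary sub-event is left open in your write-up, so as it stands neither your argument nor the paper's fully establishes the conditional form of the lemma. Your second suggested escape is the right one: Theorem \ref{thm:main} only uses $\Pr[\F(\nsg)\ge u]\,\EE[\F(\nsg)\mid \F(\nsg)\ge u] = \EE[\F(\nsg)\mathbf{1}_{\F(\nsg)\ge u}]$, and Cauchy--Schwarz gives $\EE[\F(\nsg)\mathbf{1}_{\F(\nsg)\ge u}] \le \EE[(u+2T)^4]^{1/2}\Pr[\F(\nsg)\ge u]^{1/2} = O(p^{-4})\Pr[\lnot D_3]^{1/2}$, which is $o(1)$ once the constants in Section 3 are tuned so that $\Pr[\lnot D_3] = o(p^8)$ (all three propositions there permit an arbitrary polynomial rate). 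With that adjustment your proposal closes; without it, the gap you identified remains.
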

\begin{proof}
  Let $L$ be the smallest number such that $L > u/2$ and such that both $2L$ and $2L+1$ are selected as generators of $\nsg \in \nsg(p)$. For any given $p$, this random variable is finite with probability one. Also, $L$ depends only on the selection of generators greater than $u$. On the other hand, the event that $\F(\nsg) \geq u$ depends only on the generators less than or equal to $u$, so $L$ is independent of this event. Thus
  \begin{align*}
    \EE[\F(\nsg) | \F(\nsg) \geq u] & \leq \EE[\F\left( \langle 2L, 2L+1 \rangle \right)] \\
   \leq \EE[4L^2] & = \sum_{n=\frac{u}{2}+1}^{\infty}4n^2\left(1-p^2\right)^{n-\frac{u}{2}}p^2 \\
    & = 4p^2 \sum_{m=1}^{\infty}\left( m + \frac{u}{2}\right)^2\left(1-p^2\right)^m \\
    & \leq 4p^2 \sum_{m=1}^{\infty}\left( m + \frac{u}{2}\right)^2 e^{-p^2m} \\
    & \leq 4p^2 \int_0^\infty \left( x + \frac{u}{2}\right)^2 e^{-p^2x} \, dx \\
    & = 4p^2 \left( \frac{8+4p^2u+p^4u^2}{4p^6} \right) \\
    & = \frac{8}{p^4} + \frac{4u}{p^2} + u^2. \qedhere
  \end{align*}  
\end{proof}

Let $F$ and $Z$ be random variables that represent the Frobenius number and the embedding dimension, respectively, of a numerical semigroup $\nsg \in \nsg(p)$. As we have seen,
we can bound the probability of the undesirable outcome $\lnot D_3$ by $o\left(p^4\right)$ to cancel out the factor of $O \left( \frac{1}{p^4}\right)$ in Lemma \ref{lem:unlikelycase}. Choose \[u(p) = K \left(\frac{1}{p} \left( \ln \left(\frac{1}{p}\right)\right)^3  \right)\]
  where the constant $K$ is as in (\ref{eq:u-function}), so that $\F(\nsg) \leq u(p)$ whenever the event $D_3$ holds. 


  \begin{proof}[Proof of Theorem \ref{thm:main}] To prove the upper bounds on both the expected Frobenius number and the expected embedding dimension, we will condition on whether or not the Frobenius number is bounded by $u(p)$.

    For the expected Frobenius number, we apply Corollary \ref{cor:notD3} along with (\ref{eq:u-function}) to obtain
\begin{align*}
  \EE[F] & = \Pr[F < u(p)]\EE[F | F < u(p)] + \Pr[F \geq u(p)]\EE[F | F \geq u(p)] \\
  & \leq 1 \cdot u(p) + \Pr[\lnot D_3] \EE[F | F \geq u(p)] \\
  & = O\left(\frac{1}{p} \left( \ln \left(\frac{1}{p}\right)\right)^3 \right) + o \left(p^4\right)O\left(\frac{1}{p^4}\right) = O\left(\frac{1}{p} \left( \ln \left(\frac{1}{p}\right)\right)^3 \right)
\end{align*}

We now turn to the expected embedding dimension. For any semigroup $\nsg$ with Frobenius number $f$, by definition the numbers $f+1, f+2, \dots, 2f+1$ all belong to $\nsg$ and these numbers alone generate every number greater than $2f+1$. Thus no minimal generator is larger than $2f+1$, and also $f$ itself is not a generator, so a loose bound on the embedding dimension is $\e(\nsg) \leq 2\F(\nsg)$. That is, our random variables satisfy $Z \leq 2F$. We will apply this bound in the case that $F \geq u(p)$, and thus reuse the calculation of the second term above. 

To handle the case where $F < u(p)$, let $Y$ denote the number of generators selected up to $2u(p)$. This is just a binomial random variable. If $F < u(p)$ then by the same argument as above, no minimal generator is greater than $2u(p)$. That is, $Z \leq Y$ in this case. We calculate
\begin{align*} \EE[Z] & = \Pr[F < u(p)]\EE[Z | F < u(p)] + \Pr[F \geq u(p)]\EE[Z | F \geq u(p)] \\
  & \leq 1 \cdot \EE[Y] +  \Pr[F \geq u(p)]\EE[2F | F \geq u(p)] \\
  & = O(pu(p)) + o(1) = O\left(\left( \ln \left(\frac{1}{p}\right)\right)^3 \right).\qedhere
\end{align*}
\end{proof}


\section{Experiments, Conclusions, and Future Work}
By Theorems \ref{thm:ermodel:constantp} and \ref{thm:main} we have seen that for sufficiently small $p$, the expected Frobenius number of the  Erd\H{o}s-R\'enyi numerical semigroup $\nsg(p)$ satisfies
\[ \frac{K_1}{p} \leq \EE[\F(\nsg(p))] \leq \frac{K_2}{p} \left( \log_2 \left(\frac{1}{p}\right)\right)^3 \]
and that the expected embedding dimension satisfies
\[ K_3 \leq \EE[\e(\nsg(M,p))] \leq K_4 \left( \log_2 \left(\frac{1}{p}\right)\right)^3 \] 
for appropriate constants $K_1, \dots, K_4$. However, extensive experiments \cite{Morales_data}, in which 1000 Erd\H{o}s-R\'enyi semigroups were generated for each of fifteen values of $p$, suggest the following conjecture.
\begin{conjecture}
  \begin{enumerate}
  \item The expected Frobenius number $\EE[\F(\nsg(p))]$ is of order $\frac{1}{p} \log \left(\frac{1}{p}\right)$.
  \item The expected embedding dimension $\EE[\e(\nsg(p))]$ is of order $\log \left(\frac{1}{p}\right)$. 
  \end{enumerate}
  \end{conjecture}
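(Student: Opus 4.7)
The plan is to close the $\log^2(1/p)$ gap between the upper bound of Theorem~\ref{thm:main} and the conjectured order. Since the argument of Section~3 is lossy in two places---choosing the prime $q$ of order $\frac{1}{p}\log^2(1/p)$ rather than $\frac{1}{p}$, and inserting an extra factor of $\log q$ into the Apéry-set estimate via Theorem~\ref{thm:sumsets}---the natural target is to achieve both savings simultaneously. I would aim to work with a prime $q$ of order $\frac{1}{p}\log(1/p)$, so that the random subset $\cA \subset \ZZ_q$ of residues of the selected generators below $q$ has expected size $pq = \Theta(\log q)$, and then prove that a constant number of iterated sums of $\cA$ already cover $\ZZ_q$.

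The first technical task is to sharpen Theorem~\ref{thm:sumsets}: replace the $k = \Theta(\log q)$ sum levels by $k = O(1)$ for sets $\cA$ of size $C\log q$ with $C$ a sufficiently large constant. The second-moment method of Section~2 cannot deliver this on its own, because $|k\cA| \leq |\cA|^k$ forces $k \geq \log q / \log \log q$ simply for the sumset to be large enough to cover $\ZZ_q$. A Fourier approach seems more promising: for a uniformly random $\cA$, the nontrivial Fourier coefficients of $\mathbf{1}_\cA$ should concentrate near zero with high probability, and since $\widehat{\mathbf{1}_{k\cA}}$ behaves like a $k$-fold convolution of $\widehat{\mathbf{1}_\cA}$, the $L^\infty$ norm over nonzero characters should decay geometrically in $k$. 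Once all nontrivial characters of $k\cA$ are small, $k\cA$ is equidistributed in $\ZZ_q$ and hence equal to it.

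Granting the sharpened sumset statement, the rest of the Frobenius argument carries over from Section~3: analogues of $D_1, D_2, D_3$ still hold with probability $1 - o(p^4)$, and the Apéry-set bound becomes $O(q) = O(\frac{1}{p}\log(1/p))$. For the embedding dimension, the argument of Section~4 applied with the improved $u(p)$ immediately yields $\EE[\e(\nsg(p))] = O(\log(1/p))$, matching the conjectured order. A matching lower bound, beyond the trivial $\Omega(1/p)$ on the Frobenius number, would in turn require a separate second-moment analysis on the minimal generators in the window $[m, 2m]$, where $m$ is the smallest selected generator and has order $1/p$.

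The main obstacle I expect lies in the Fourier-analytic sumset bound: random subsets of $\ZZ_q$ of density $\Theta(\log q / q)$ sit exactly at the threshold where moment methods are tight, and controlling the supremum of nontrivial Fourier coefficients uniformly (rather than just the $L^2$ mass of $k\cA$) is delicate, because one must beat a union bound over roughly $q$ characters using only the randomness in a set of size $\log q$. If this uniform Fourier estimate cannot be pushed through, a fallback would be to combine a weaker sumset improvement (saving half a log factor) with a more refined choice of $q$ via a union bound over many primes in the target interval, which would at least narrow the gap even if not closing it completely.
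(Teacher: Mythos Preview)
First, note that this statement is a \emph{conjecture} in the paper: the authors present it as an open problem supported by numerical experiments and do not claim a proof, so there is no paper proof to compare against. Your proposed strategy, however, contains a fatal gap. The plan hinges on showing that for a random $\cA \subset \ZZ_q$ with $|\cA| = \Theta(\log q)$ one has $k\cA = \ZZ_q$ for $k = O(1)$. You correctly observe the obstruction $|k\cA| \leq |\cA|^k$, but then treat it as a defect of the second-moment method rather than what it actually is: a hard cardinality constraint. If $|\cA| = C\log q$ and $k$ is bounded, then $|k\cA| \leq (C\log q)^{O(1)} = q^{o(1)} \ll q$, so $k\cA$ simply cannot equal $\ZZ_q$, and no proof technique can establish a false statement.

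The Fourier heuristic does not rescue this, and is in fact based on a confusion: it is the $k$-fold \emph{convolution} $\mathbf{1}_\cA^{*k}$ whose Fourier transform is $(\widehat{\mathbf{1}_\cA})^k$, not the indicator $\mathbf{1}_{k\cA}$. Proving $\mathbf{1}_\cA^{*k}(x) > 0$ for all $x$ by Fourier means comparing the main term $|\cA|^k/q$ to an error governed by $\max_{\xi\neq 0}|\widehat{\mathbf{1}_\cA}(\xi)|^k$; for a random set of size $s$ the nontrivial coefficients have magnitude on the order of $\sqrt{s}$, so the main term wins only once $s^{k/2}$ is at least of order $q$, i.e.\ once $k$ is at least of order $\log q/\log\log q$---the same barrier. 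Feeding this minimal $k$ back into the Ap\'ery bound gives $kq$ of order $\frac{1}{p}(\log(1/p))^2/\log\log(1/p)$, still above the conjectured order. The paper's own speculative suggestions in Section~5 take different routes (locating a smaller prime inside an early sumset, or dropping primality of $q$) and likewise do not claim to reach the conjecture.
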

That is, it appears that both of our new upper bounds are too large by two log factors while the lower bounds of De Loera, O'Neill, and Wilburne are too small by a single log factor. Furthermore, as we show in Figures \ref{fig:frobenius} and \ref{fig:embedding}, the true constant factors appear to be quite small. Additionally, this asymptotic behavior appears to apply even when $p$ is relatively large. 
\begin{figure}[ht]
            \begin{center}
                \scalebox{0.6}{\input{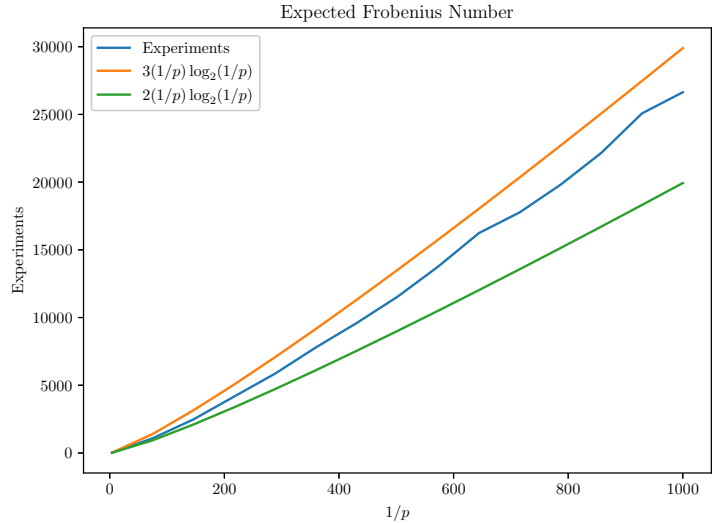}}
            \end{center}
            \caption{Average Frobenius number of random numerical semigroups generated using the ER-type model vs $(1/p)\log_2(1/p)$.}
            \label{fig:frobenius}
\end{figure}

\begin{figure}[ht]
        \begin{center}
            \scalebox{0.6}{\input{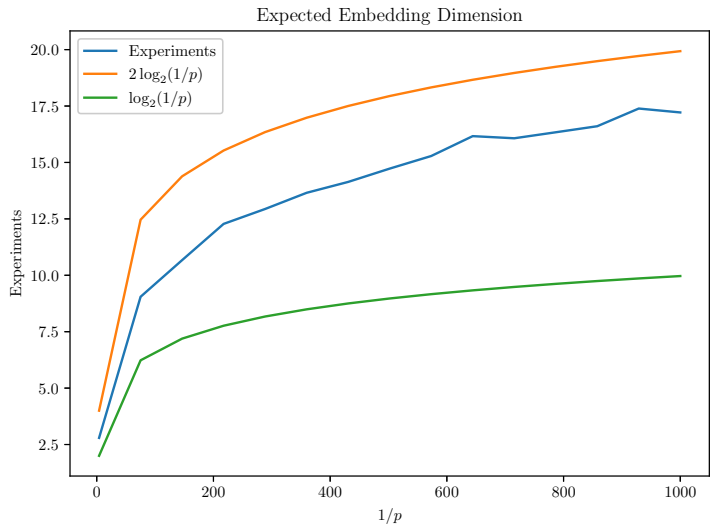}}
        \end{center}
        \caption{Average embedding dimension of random numerical semigroups generated using the ER-type model vs $\log_2(1/p)$.}
        \label{fig:embedding}
\end{figure}

We propose some possible approaches to improve the asymptotic upper bounds to more closely match the data. 
\begin{enumerate}
\item The proof of Theorem \ref{thm:main} involves waiting for a prime generator $q$ to be selected. By the Prime Number Theorem, the first prime generator is expected to be of order roughly $\frac{1}{p} \log \left(\frac{1}{p}\right)$. However, the first generator and in fact the first $k$ generators for any constant $k$ are expected to be of order $\frac{1}{p}$. Assuming the very likely event that (say) $k$ generators $g_1, \dots, g_k$ are selected between 1 and $\frac{2k}{p}$, let $G \subseteq \nsg$ be the $m$-fold sumset of $\{g_1, \dots, g_k\}$ where $m$ is slightly more than $\log \log \left( \frac{1}{p} \right)$. Then $\max\{G\}$ is of order slightly more than $\frac{1}{p} \log \log \left( \frac{1}{p} \right)$ and $|G|$ is of order slightly more than $\log \left( \frac{1}{p} \right)$  If the primes were a truly random subset of $\NN$ of density $\frac{1}{\ln n}$, then $G$ would be expected to contain a prime. We do not know how to estimate the probability that $G$ actually contains a small prime, but if it were possible to do so, then this prime could replace $q$ in the proof of the main theorem and nearly eliminate one of the log factors.      

\item Alternatively, one could ignore primes entirely and simply try to bound the Ap\'ery set with respect to the smallest generator of $\nsg$. This might be feasible via some generalization of Theorem \ref{thm:sumsets} in which $q$ would not be required to be prime. The proof would need to be modified since it relies on the symmetry of prime cyclic groups.
\end{enumerate}

To give an idea of the behavior of Erd\H{o}s-R\'enyi numerical semigroups, we end with two pictures of semigroups chosen from $\nsg(M,p)$ with $M$ large compared to $\frac{1}{p}$. This is effectively equivalent to $\nsg(p)$ because with high probability the Frobenius number is smaller than $M$.

\begin{figure}[ht]
        \begin{center}
            \scalebox{0.6}{\input{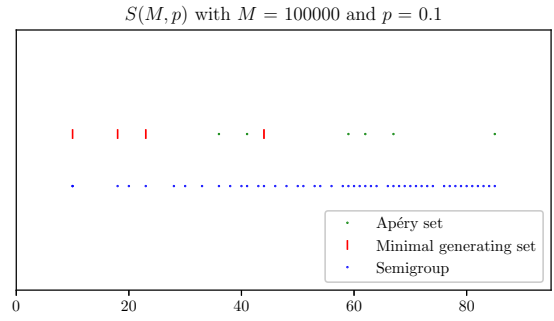}}
        \end{center}
\caption{A numerical semigroup chosen from $\nsg(100000, 0.1)$}     
\end{figure}

\begin{figure}[ht]
        \begin{center}
            \scalebox{0.6}{\input{ertype_visual.pgf}}
        \end{center}
        \caption{A numerical semigroup chosen from $\nsg(100000, 0.01)$}
\end{figure}





\textbf{Acknowledgments:} The authors thank Jes\'us de Loera, Christopher O'Neill, Adolfo Quiroz, and an anonymous referee of a previous draft of this manuscript for their helpful comments. Tristram Bogart was supported by internal research grant INV-2025-213-3438 from the Faculty of Sciences of the Universidad de los Andes.

\bibliography{library}

\providecommand{\bysame}{\leavevmode\hbox to3em{\hrulefill}\thinspace}
\providecommand{\MR}{\relax\ifhmode\unskip\space\fi MR }
\providecommand{\MRhref}[2]{%
  \href{http://www.ams.org/mathscinet-getitem?mr=#1}{#2}
}
\providecommand{\href}[2]{#2}
\begin{thebibliography}{DLOW18}

\bibitem[ADGS20]{assi2020numerical}
Abdallah Assi, Marco D'Anna, and Pedro~A Garc{\'\i}a-S{\'a}nchez,
  \emph{Numerical semigroups and applications}, vol.~3, Springer Nature, 2020.

\bibitem[AG07]{aliev2007optimal}
Iskander~M Aliev and Peter~M Gruber, \emph{An optimal lower bound for the
  {F}robenius problem}, Journal of Number Theory \textbf{123} (2007), no.~1,
  71--79.

\bibitem[AHH11]{aliev2011expected}
Iskander Aliev, Martin Henk, and Aicke Hinrichs, \emph{Expected {F}robenius
  numbers}, Journal of Combinatorial Theory, Series A \textbf{118} (2011),
  no.~2, 525--531.

\bibitem[Arn99]{arnold1999weak}
Vladimir~Igorevich Arnold, \emph{Weak asymptotics for the numbers of solutions
  of {D}iophantine problems}, Functional Analysis and Its Applications
  \textbf{33} (1999), no.~4, 292--293.

\bibitem[Arn04]{arnold2004arnold}
Vladimir~I Arnold, \emph{{A}rnold's problems}, Springer, 2004.

\bibitem[AS16]{AlSp}
Noga Alon and Joel~H Spencer, \emph{The {P}robabilistic {M}ethod}, John Wiley
  \& Sons, 2016.

\bibitem[Bac90]{Back}
J{\"o}rgen Backelin, \emph{On the number of semigroups of natural numbers},
  Mathematica Scandinavica (1990), 197--215.

\bibitem[Del20]{delgado2020conjecture}
Manuel Delgado, \emph{Conjecture of {W}ilf: a survey}, Numerical semigroups,
  Springer INdAM Ser., vol.~40, Springer, Cham, 2020, pp.~39--62.

\bibitem[DLOW18]{de2018random}
Jesus De~Loera, Christopher O'Neill, and Dane Wilburne, \emph{Random numerical
  semigroups and a simplicial complex of irreducible semigroups}, The
  Electronic Journal of Combinatorics (2018), P4--37.

\bibitem[LO23]{LeeOh}
Sang~June Lee and Jun~Seok Oh, \emph{On zero-sum free sequences contained in
  random subsets of finite cyclic groups}, Discrete Applied Mathematics
  \textbf{330} (2023), 118--127.

\bibitem[Mor23]{Morales_data}
Santiago Morales, \emph{randnumsgps},
  \url{https://github.com/smoralesduarte/randnumsgps}, 2023.

\bibitem[Nym72]{Nymann}
James~E Nymann, \emph{On the probability that k positive integers are
  relatively prime}, Journal of Number Theory \textbf{4} (1972), no.~5,
  469--473.

\bibitem[RGS09]{rosales2009numerical}
Jos{\'e}~Carlos Rosales and Pedro~A Garc{\'\i}a-S{\'a}nchez, \emph{Numerical
  semigroups}, Springer, 2009.

\bibitem[Ros41]{rosser1941}
Barkley Rosser, \emph{Explicit bounds for some functions of prime numbers},
  American Journal of Mathematics \textbf{63} (1941), no.~1, 211--232.

\bibitem[TV06]{TaoVu}
Terence Tao and Van~H Vu, \emph{Additive combinatorics}, vol. 105, Cambridge
  University Press, 2006.

\bibitem[Wil78]{wilf}
Herbert~S Wilf, \emph{A circle-of-lights algorithm for the “money-changing
  problem”}, The American Mathematical Monthly \textbf{85} (1978), no.~7,
  562--565.

\end{thebibliography}
\bibliographystyle{amsalpha}

\end{document}